\documentclass[10pt]{article}

\usepackage[dvips]{graphicx}
\usepackage{rotating}
\usepackage{graphics}
\usepackage{epsfig}
\usepackage{amsmath}
\usepackage{amssymb}
\usepackage{amsthm}
\usepackage{amsfonts}
\usepackage{latexsym}
\usepackage{psfrag, float}
\usepackage{multirow}
\usepackage{bezier}
\usepackage{curves}

\topmargin 0.3in
\textwidth 6in
\textheight 9in
\hoffset=-30pt
\voffset=-50pt

\title{Countable homogeneous lattices}
\author{A. Abogatma and J.K. Truss} 
\date{Department of Pure Mathematics,
          University of Leeds,
          Leeds LS2 9JT, UK, pmtjkt@leeds.ac.uk$^1$.}
\begin{document}
\maketitle 
\newtheorem{lemma}{Lemma}[section]
\newtheorem{theorem}[lemma]{Theorem}
\newtheorem{corollary}[lemma]{Corollary}
\newtheorem{definition}[lemma]{Definition}
\newtheorem{remark}[lemma]{Remark}
\newtheorem{construction}[lemma]{Construction}
\setcounter{footnote}{1}\footnotetext{This paper forms part of the first author's PhD thesis at the University of Leeds. 2010 Mathematics Classification 06A99, 03G20}
\newcounter{number}

\begin{abstract}
We show that there are uncountably many countable homogeneous lattices. We give a discussion of which such lattices can be 
modular or distributive. The method applies to show that certain other classes of structures also have uncountably many 
homogeneous members.
\end{abstract}

\section{Introduction}\label{intro}

A countable structure $\cal A$ is said to be {\em homogeneous} (sometimes also called `ultrahomogeneous') if any isomorphism 
between finitely generated substructures extends to an automorphism. In the literature, this notion is usually applied to 
relational structures, for instance in \cite{lachlan}, in which case `finitely generated' can be replaced by `finite'. The 
general Fra\"iss\'e theory of homogeneous structures however applies even when there are functions in the signature, with 
appropriate modifications. He gave necessary and sufficient conditions for a countable structure to be homogeneous, in terms 
of its {\em age}, being the family of finitely generated structures $\cal C$ which are isomorphic to a substructure of 
$\cal A$. Thus Fra\"iss\'e's Theorem (see \cite{hodges} for instance) says that $\cal C$ is equal to the age of some countable 
homogeneous structure if and only if it has the following properties:

$\cal C$ has only countably many members, up to isomorphism,

any finitely generated substructure of a member of $\cal C$ lies in $\cal C$,

any structure isomorphic to a member of $\cal C$ lies in $\cal C$,

the joint embedding property JEP: any two members of $\cal C$ can be embedded in another member of $\cal C$,

the amalgamation property AP: if $A$, $B$, and $C \in {\cal C}$, and embeddings $p_1: A \to B$, $p_2: A \to C$ are given, then 
there are $D \in {\cal C}$, and embeddings $p_3: B \to D$, $p_4: C \to D$ such that `the diagram commutes', meaning that 
$p_3p_1 = p_4p_2$. 

Any class having these five properties is referred to as an {\em amalgamation class}, and so Fra\"iss\'e's Theorem reduces 
the search for countable homogeneous structures to that for amalgamation classes. (Note that there is a slightly different use 
of the term `amalgamation class' in the literature, namely the amalgamation class of a class of structures comprises all its 
members over which all amalgamations can be performed.) Two standard examples of amalgamation classes are the family of all 
finite linear orders, where the corresponding homogeneous structure is the ordered set of rational numbers, and the class of 
all finite partial orders, where the homogeneous structure is the `generic partial order' (which is one of the structures in 
Schmerl's list \cite{schmerl} of all the countable homogeneous partial orders).

In this paper the structures we consider are lattices. These are partially ordered sets in which any two elements $a$ and $b$ 
have a least upper bound $a \vee b$ and a greatest lower bound $a \wedge b$, referred to as their `join' and `meet' 
respectively. We work in the signature $\{\vee, \wedge\}$ (in which $\le$ is quantifier-free definable).   

As far as we are aware, the following are the countable homogeneous lattices known up till now: the trivial (one-point) 
lattice, the Fra\"iss\'e limit of the class of all finite lattices, the Fra\"iss\'e limit of the class of all finite 
distributive lattices, and the ordered set of rational numbers. J\'onsson showed in \cite{jonsson} that the class of all 
lattices forms an amalgamation class, and the same proof shows that the class of all finite lattices does too; Pierce 
\cite{pierce} gave the analogous result for the class of distributive lattices. In fact, the only {\em varieties} of lattices
which have the amalgamation property correspond to the following three cases: the varieties of all lattices, all distributive 
lattices, and all trivial lattices. See \cite{day}. 

This remarkable result does not however seem to preclude the existence of other countable homogeneous lattices, and the main 
result of this paper shows that there are indeed uncountably many such. Since the ones we construct are all non-modular, the
next natural question is to elucidate the situation with regard to modular lattices. The conjecture would be that any 
countable homogeneous modular lattice is also distributive; alternatively one can examine the question as to whether the
class of all finite modular lattices is an amalgamation class, or, more generally, whether there exists a class of finite 
modular lattices, not all distributive, which is an amalgamation class, or failing this, whether such might exist on replacing 
`finite' by `finitely generated'. As remarked in \cite{day}, it seems hard to find concrete examples of the failure of 
amalgamation (the authors say that since their proof requires only that $N_5$ lies in the variety, it would be of interest 
to have an elementary proof, as opposed to \cite{gratzer2} that if a variety satisfies the amalgamation property and $M_3$ 
lies in the variety, then so does $N_5$, in the standard notation).

\section{The main proof} 

We begin by recalling the proof of the amalgamation property for the family of all lattices, due to J\'onsson, which 
specializes to various other classes. An isomorphism from a substructure of a structure $\cal A$ to itself is referred to as 
a {\em partial automorphism}.

\begin{lemma} \label{2.1} The following families of lattices all have the amalgamation property:

(i) the family of finite lattices,

(ii) the family of countable lattices,

(iii) the family of finitely generated lattices.
\end{lemma}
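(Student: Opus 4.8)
The plan is to prove the amalgamation property for lattices by giving an explicit construction of the amalgam, following J\'onsson's approach. Suppose we are given lattices $A$, $B$, $C$ and embeddings $p_1 \colon A \to B$ and $p_2 \colon A \to C$. Since embeddings are injective and we only need the result up to isomorphism, I would first reduce to the case where $A$ is a common sublattice of both $B$ and $C$, with $B \cap C = A$ (identifying along the embeddings and renaming elements of $B \setminus A$ and $C \setminus A$ so that they are disjoint off $A$). The goal is then to define a lattice structure on the set $D = B \cup C$, so that $B$ and $C$ are both sublattices of $D$ and the inclusions commute with $p_1, p_2$ in the required sense.

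The key idea is that the partial order on $D$ is already essentially determined: for $x, y \in D$ I would set $x \le_D y$ iff either $x,y$ both lie in $B$ and $x \le_B y$, or both lie in $C$ and $x \le_C y$, or $x$ and $y$ lie in different factors and there is an intermediate element $a \in A$ with $x \le a \le y$ (in the appropriate factors). First I would check that $\le_D$ is a genuine partial order extending both $\le_B$ and $\le_C$; transitivity across the two factors is where the condition $B \cap C = A$ and the fact that $A$ is a sublattice (closed under $\vee$ and $\wedge$) get used, since a ``cross'' chain $x \le a \le y \le a' \le z$ can be routed through $a \vee a'$ or the relevant element of $A$. The crucial point is then to show that $(D, \le_D)$ is actually a lattice, i.e.\ that arbitrary pairs have joins and meets. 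For $x \in B$ and $y \in C$ I would define $x \vee_D y$ to be $x \vee_B a \vee_? \ldots$; more precisely the join of a $B$-element and a $C$-element is obtained by pushing both down to their least upper bounds in $A$: one shows that the set of elements of $A$ above both $x$ and $y$ has a least member (using that $A$ is a sublattice and that $\le_D$ restricted appropriately behaves well), and that this element serves as $x \vee_D y$.

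The main obstacle, and the heart of J\'onsson's argument, is verifying that these candidate joins and meets genuinely are least upper bounds and greatest lower bounds in $(D, \le_D)$, and in particular that the set of common upper bounds of a cross pair $\{x, y\}$, when intersected with $A$, has a least element so that the construction is well defined. This requires the observation that any upper bound of $x \in B$ and $y \in C$ in $D$ must, by the definition of $\le_D$, dominate some element of $A$ lying above $x$ and some element of $A$ lying above $y$, and hence dominate their meet-irreducible combination inside $A$; the closure of $A$ under the lattice operations is exactly what guarantees a canonical witness. Once the lattice axioms are verified on $D$, parts (i), (ii), and (iii) follow immediately by inspection: if $A$, $B$, $C$ are all finite then $D = B \cup C$ is finite; if all are countable then $D$ is countable; and if all are finitely generated then $D$ is generated by the (finite) union of generating sets for $B$ and $C$, so $D$ lies in each of the three respective families, completing the verification of the amalgamation property in all three cases.
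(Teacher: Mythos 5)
Your reduction to the case $A = B \cap C$ and your definition of the amalgamated order $\le_D$ on $B \cup C$ match the standard setup, but the central claim of your argument --- that $(B \cup C, \le_D)$ can be made into a lattice with the join of a cross pair given by a canonical element of $A$ --- is false in general. The set of elements of $A$ above both $x \in B$ and $y \in C$ does have a least member in the finite case (namely $a_x \vee_A a_y$, where $a_x$, $a_y$ are the least elements of $A$ above $x$, $y$ respectively), but this element is merely \emph{an} upper bound of $\{x,y\}$ in $D$, not the least one, and in fact $\{x,y\}$ may have no least upper bound in $B \cup C$ at all. Concretely, let $A = \{0,a,b,1\}$ be the four-element Boolean lattice, let $B = A \cup \{x,w\}$ with $0 < x < a$, $x < w$, $b < w < 1$ (so $x \vee_B b = w < 1 = a \vee b$), and let $C = A \cup \{y,v\}$ symmetrically with $0 < y < b$, $y < v$, $a < v < 1$. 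Then in $B \cup C$ the common upper bounds of $\{x,y\}$ are exactly $w$, $v$ and $1$, with $w$ and $v$ incomparable, so no join exists there. (In the infinite cases there is the further problem that the set of elements of $A$ above a given $x \in B$ is a filter of $A$ that need not be principal, so your $a_x$ need not even exist.)

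This obstruction is precisely what J\'onsson's construction, which the paper follows, is designed to circumvent: one does not try to make the poset $B \cup C$ a lattice, but instead embeds it into its Dedekind--MacNeille completion $D$, which is a complete lattice. The point that then needs checking is that $B$ and $C$ sit inside $D$ as \emph{sublattices}, i.e.\ that for $x, x' \in B$ the element $x \vee_B x'$ is already the least upper bound of $\{x,x'\}$ in the poset $B \cup C$ (any upper bound $u \in C$ lies above elements $a, a' \in A$ with $x \le a$, $x' \le a'$, hence above $a \vee_A a' \ge x \vee_B x'$; here the closure of $A$ under the operations is used), and dually for meets. In the finite case $D$ itself serves as the amalgam; in the countable and finitely generated cases $D$ may be too large, so one takes the sublattice of $D$ generated by $B \cup C$. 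Your closing observations about cardinality and generating sets are the right ones, but they must be applied to that sublattice of the completion rather than to $B \cup C$ itself.
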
 

\begin{proof} Let $A$, $B$, $C$ be lattices in one of these classes, and $p_1: A \to B$, $p_2: A \to C$  be given embeddings. 
In verifying that this diagram can be amalgamated, we may assume, by replacing $B$ and $C$ by isomorphic copies, that $A$ is a 
sublattice of both $B$ and $C$, and that $A = B \cap C$. In the finite case, the amalgam may then be taken to be the 
Dedekind--MacNeille completion $D$ of the partial order amalgam $B \cup C$ of $B$ and $C$, and in the other cases, it is taken 
to be the sublattice of $D$ generated by $B \cup C$. 
\end{proof}

We remark that on the basis of this result, and the already known examples, we can so far identify the following five 
countable homogeneous lattices, which are all easily seen to be non-isomorphic: the trivial lattice, the rational numbers 
under the usual relation, the countable generic distributive lattice, and the Fra\"iss\'e limits of the classes of finite 
lattices (called the `generic locally finite lattice') and of all countable lattices (since the Fra\"iss\'e limit of the class 
of all finitely generated lattices equals that of the class of all countable lattices, as they have the same age). In the 
remainder of the paper we show that there are however $2^{\aleph_0}$ further examples. We formulate this result in a rather 
more general context. We are grateful to the referee for pointing out the generalization.

\begin{lemma} \label{2.2} Let $\cal C$ be an amalgamation class of finite or countable structures. Then for any 
${\cal A} \in {\cal C}$ and any partial automorphism $p$ of $\cal A$, there are an extension $\cal B$ of $\cal A$ in $\cal C$ 
and an extension $\theta$ of $p$ to a partial automorphism of $\cal B$ such that 
${\cal A} \subseteq {\rm dom} \, \theta \cap {\rm range} \, \theta$. \end{lemma}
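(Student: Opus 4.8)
The plan is to realize $\theta$ as the isomorphism induced between two copies of $\mathcal{A}$ sitting inside a single amalgam, obtained by applying the amalgamation property of $\mathcal{C}$ with the substructure $\mathrm{range}\,p$ as base. I shall first arrange $\mathcal{A}\subseteq\mathrm{dom}\,\theta$, and then obtain the range condition by repeating the argument for the inverse map.

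Write $P=\mathrm{dom}\,p$ and $Q=\mathrm{range}\,p$, so that $p\colon P\to Q$ is an isomorphism between substructures of $\mathcal{A}$. Since $P$ and $Q$ are substructures of $\mathcal{A}\in\mathcal{C}$, they themselves lie in $\mathcal{C}$ (for finite $\mathcal{C}$ this is immediate, and this is the one point at which closure of $\mathcal{C}$ under the relevant substructures must be invoked). I would then amalgamate two copies of $\mathcal{A}$ over $Q$: take the base to be $Q$, let the first embedding $j_1\colon Q\hookrightarrow\mathcal{A}$ be the inclusion, and let the second embedding $j_2\colon Q\to\mathcal{A}$ be $p^{-1}$, whose image is $P$. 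By the amalgamation property there are $\mathcal{D}\in\mathcal{C}$ and embeddings $k_1,k_2\colon\mathcal{A}\to\mathcal{D}$ with $k_1j_1=k_2j_2$, that is, $k_1(y)=k_2(p^{-1}(y))$ for every $y\in Q$.

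Now identify $\mathcal{A}$ with the substructure $k_1(\mathcal{A})$ of $\mathcal{B}:=\mathcal{D}$, and set $\theta=k_2k_1^{-1}$, an isomorphism from $k_1(\mathcal{A})$ onto $k_2(\mathcal{A})$ and hence a partial automorphism of $\mathcal{B}$ with $\mathrm{dom}\,\theta=\mathcal{A}$. To check that $\theta$ extends $p$, take $x\in P$; then $\theta(k_1(x))=k_2(x)$, while applying the commuting identity to $y=p(x)\in Q$ gives $k_1(p(x))=k_2(p^{-1}(p(x)))=k_2(x)$. Thus $\theta$ sends the element $x$ of $\mathcal{A}$ to the element $p(x)$ of $\mathcal{A}$, so $\theta\upharpoonright P=p$. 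This yields an extension $(\mathcal{B},\theta)$ with $\mathcal{A}\subseteq\mathrm{dom}\,\theta$.

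Finally, to secure $\mathcal{A}\subseteq\mathrm{range}\,\theta$ as well, I would apply the same construction a second time, now to the structure $\mathcal{B}$ and the partial automorphism $\theta^{-1}$, whose range is $\mathcal{A}$. This produces $\mathcal{B}'\in\mathcal{C}$ extending $\mathcal{B}$ and a partial automorphism $\psi\supseteq\theta^{-1}$ with $\mathcal{B}\subseteq\mathrm{dom}\,\psi$; taking the final map to be $\psi^{-1}$ gives an extension of $\theta$, hence of $p$, whose domain still contains $\mathcal{A}$ (extensions only enlarge it) and whose range is $\mathrm{dom}\,\psi\supseteq\mathcal{B}\supseteq\mathcal{A}$. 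The step I expect to require the most care is the choice of amalgamation maps, in particular taking $j_2=p^{-1}$, so that the induced isomorphism genuinely restricts to $p$ rather than to $p^{-1}$; once the commuting square is read off correctly, the passage from the one-sided statement to the full conclusion about both domain and range is a routine reapplication of the construction to the inverse.
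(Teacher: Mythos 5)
Your proof is correct and follows essentially the same route as the paper's: amalgamate two copies of $\mathcal{A}$ over the graph of $p$ (you use base $\mathrm{range}\,p$ with maps $(\mathrm{inclusion},\,p^{-1})$ where the paper uses base $\mathrm{dom}\,p$ with $(\mathrm{inclusion},\,p)$ --- an inessential mirror image), read off the induced partial automorphism with domain $\mathcal{A}$, and then reapply the construction to the inverse to capture the range. Your explicit flagging of the need for $\mathrm{dom}\,p$ (resp.\ $\mathrm{range}\,p$) to lie in $\mathcal{C}$ is a point the paper passes over silently, and is harmless in all the intended applications.
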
 

\begin{proof} It suffices to deal with the domain as we can use a similar argument for the range. Let us consider
the three structures dom $p$, $\cal A$, and $\cal A$, and embed dom $p$ into the first copy of $\cal A$ by inclusion $i$, and
into the second by $p$. By the amalgamation property there is ${\cal B} \in {\cal C}$ and there are embeddings $q, q'$
of the first and second copies of $\cal A$ into $\cal B$ so that $qi = q'p$. By replacing $\cal B$ by a copy we may assume that
$q'$ is inclusion, and it follows that $q$ is an extension of $p$ whose domain contains $\cal A$. Repeating this argument on 
$q^{-1}$ leads to an extension $\theta$ of $q$ whose range contains $\cal A$.

\end{proof}

\begin{lemma} \label{2.3} Let $\cal C$ be an amalgamation class of finite or countable structures. Then any member of $\cal C$ 
is a substructure of some countable homogeneous structure which is the union of a countable chain of members of $\cal C$. \end{lemma}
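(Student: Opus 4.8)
The plan is to build $\cal M$ as the union of an increasing chain $\cal A = A_0 \subseteq A_1 \subseteq A_2 \subseteq \cdots$ of members of $\cal C$, constructed by a dovetailing recursion together with an expanding family of partial automorphisms whose unions will witness homogeneity. Taking $A_0 = \cal A$ ensures the given member is a substructure of $\cal M = \bigcup_n A_n$, and since each $A_n \in \cal C$ this already exhibits $\cal M$ as the required chain-union; as $\cal M$ is countable, it remains only to arrange homogeneity.

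Two preliminary observations make the task manageable. First, any finitely generated substructure $F$ of $\cal M$ already lies in some $A_n$: its finitely many generators lie in a common $A_n$, and $A_n$, being a substructure, contains the substructure they generate. Hence every isomorphism $p$ between finitely generated substructures of $\cal M$ is a partial automorphism of some single $A_n$. Secondly, since such a $p$ is determined by the images of a finite generating set and $\cal M$ is countable, there are only countably many partial automorphisms of $\cal M$ between finitely generated substructures; so they can be enumerated, and a bookkeeping scheme set up to attend to each, even though the full list only becomes visible as the chain is built.

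The engine of the construction is Lemma~\ref{2.2}. Given a partial automorphism $p$ of $A_n$, it produces $A_{n+1} \supseteq A_n$ in $\cal C$ and an extension of $p$ to a partial automorphism $\theta$ of $A_{n+1}$ with $A_n \subseteq \mathrm{dom}\,\theta \cap \mathrm{range}\,\theta$. For a fixed $p$, iterating this yields an increasing chain of partial automorphisms $p = \theta_0 \subseteq \theta_1 \subseteq \cdots$ over an increasing chain of members of $\cal C$, with the domain and range of $\theta_{k+1}$ containing the whole of the structure on which $\theta_k$ lives. If the underlying chain is arranged to be cofinal in $(A_n)$, the union $\bigcup_k \theta_k$ is then an automorphism of $\cal M$ extending $p$: it is an isomorphism, being a union of a chain of isomorphisms, and its domain and range are each all of $\cal M$. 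Thus every partial automorphism extends to a genuine automorphism, which is exactly homogeneity.

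The one real obstacle is the bookkeeping. I would interleave countably many such ``threads'', one for each partial automorphism that ever appears, dovetailing them with the growth of the chain so that (i) every level $A_n$ is eventually constructed, (ii) each thread performs infinitely many Lemma~\ref{2.2} steps, and (iii) the family of structures used by any single thread is cofinal in the whole chain, so that its limiting isomorphism really does exhaust $\cal M$ in both domain and range. The delicate point is meeting (iii) while new partial automorphisms keep being created at each stage, since each new $A_{n+1}$ spawns new finitely generated substructures and hence new $p$'s to enumerate; the standard device is a priority enumeration of pairs $(p,k)$ in which $p$ ranges over the partial automorphisms seen so far and $k$ records how far its back-and-forth has progressed, processing one pair at a time. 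Checking that this schedule meets all three demands is then routine once the enumeration is fixed.
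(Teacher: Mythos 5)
Your proposal is correct and follows essentially the same route as the paper: build ${\cal A}^*$ as the union of a chain in $\cal C$, use Lemma~\ref{2.2} repeatedly as the extension engine, and dovetail the countably many partial automorphisms with finitely generated domain (each of which lives in some $A_n$) via a bookkeeping enumeration so that each is revisited infinitely often and its extensions union to an automorphism. The paper implements the scheduling you describe with a bijection $\omega\to\omega^2$; no substantive difference.
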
 

\begin{proof} Let $\cal A$ be the given countable structure. We construct the desired countable homogeneous structure as the 
union ${\cal A}^*$ of a chain of structures ${\cal A} = {\cal A}_0 \le {\cal A}_1 \le {\cal A}_2 \le \ldots$ in $\cal C$. At 
each stage we shall also have a countable list $P_n$ of partial automorphisms of ${\cal A}_n$ which are to be extended to 
automorphisms of ${\cal A}^*$. Let $\theta: \omega \to \omega^2$ be a bijection to help with `book-keeping' such that if
$\theta(n) = (i, j)$, then $j \le n$.

Let $P_0$ be the family of partial automorphisms of ${\cal A}_0$ whose domain (and hence also range) is finitely generated.

In general, assume that ${\cal A}_n$ in $\cal C$ and countable $P_0, P_1, \ldots, P_n$ have been chosen, and suppose that the 
members of each $P_j$ are arranged in a list of length $\omega$. Let $\theta(n) = (i, j)$ so that $j \le n$. We apply Lemma 
\ref{2.2} to the $i$th member $p$ of $P_j$, finding an extension of it to a partial automorphism $q$ of a structure 
${\cal A}_{n+1} \in {\cal C}$ whose domain and range contain ${\cal A}_n$. We let $P_{n+1}$ be the family of all partial 
automorphisms of ${\cal A}_{n+1}$ whose domain is finitely generated together with $q$.

Each partial automorphism $p$ of ${\cal A}^*$ whose domain is finitely generated is then a partial automorphism of some 
${\cal A}_n$, so lies in $P_n$, and at infinitely many subsequent stages is extended so that the union of all these extensions 
is an automorphism of ${\cal A}^*$ extending $p$. Thus ${\cal A}^*$ is homogeneous.  \end{proof}

\begin{theorem} \label{2.4} Let $\cal C$ be an amalgamation class of finite or countable structures, and suppose that there 
are ${\cal Z} \le {\cal A}$, both in $\cal C$, such that $\cal A$ is finitely generated and $\cal Z$ has $2^{\aleph_0}$ 
self-embeddings. Then $\cal C$ contains at least $2^{\aleph_0}$ isomorphism types of finitely generated structures.
\end{theorem}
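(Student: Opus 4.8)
The plan is to manufacture, from each self-embedding of $\mathcal{Z}$, a finitely generated member of $\mathcal{C}$ by amalgamating two copies of $\mathcal{A}$ over $\mathcal{Z}$, and then to run a cardinality argument showing that each isomorphism type can arise from at most countably many self-embeddings, so that the $2^{\aleph_0}$ self-embeddings force $2^{\aleph_0}$ isomorphism types.

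First I would fix the inclusion $i\colon \mathcal{Z} \to \mathcal{A}$ and, for a self-embedding $f$ of $\mathcal{Z}$, apply the amalgamation property to the span $\mathcal{A} \xleftarrow{\,i\,} \mathcal{Z} \xrightarrow{\,i\circ f\,} \mathcal{A}$, obtaining $\mathcal{D}_f \in \mathcal{C}$ with embeddings $p_3, p_4\colon \mathcal{A} \to \mathcal{D}_f$ satisfying $p_3 i = p_4 i f$. I would then let $\mathcal{B}_f$ be the substructure of $\mathcal{D}_f$ generated by $p_3(\mathcal{A}) \cup p_4(\mathcal{A})$: since $\mathcal{A}$ is finitely generated so is $\mathcal{B}_f$, and as a finitely generated substructure of a member of $\mathcal{C}$ it lies in $\mathcal{C}$. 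Writing $\iota_1 = p_3$ and $\iota_2 = p_4$ as embeddings $\mathcal{A} \to \mathcal{B}_f$, the defining relation reads $\iota_1(i(z)) = \iota_2(i(f(z)))$ for all $z \in \mathcal{Z}$.

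The decisive observation is that $f$ can be reconstructed from any pair of embeddings of $\mathcal{A}$ realizing this gluing. Concretely, I would fix a finitely generated $\mathcal{B} \in \mathcal{C}$, set $F = \{\, f : \mathcal{B}_f \cong \mathcal{B} \,\}$, and for each $f \in F$ choose an isomorphism $\psi_f\colon \mathcal{B}_f \to \mathcal{B}$, putting $a_1 = \psi_f \iota_1$ and $a_2 = \psi_f \iota_2$, two embeddings $\mathcal{A} \to \mathcal{B}$. Then $a_1(i(z)) = a_2(i(f(z)))$, so $f(z) = i^{-1}\bigl(a_2^{-1}(a_1(i(z)))\bigr)$; thus the pair $(a_1, a_2)$ determines $f$, making $f \mapsto (a_1, a_2)$ injective on $F$. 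Because $\mathcal{A}$ is finitely generated and $\mathcal{B}$ is countable, there are only countably many embeddings $\mathcal{A} \to \mathcal{B}$, hence only countably many such pairs, so $F$ is countable.

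Finally I would note that the $2^{\aleph_0}$ self-embeddings of $\mathcal{Z}$ are partitioned according to the isomorphism type of $\mathcal{B}_f$, each block being countable by the previous step; since a union of fewer than $2^{\aleph_0}$ countable sets has size less than $2^{\aleph_0}$, there must be at least $2^{\aleph_0}$ distinct types, all finitely generated and lying in $\mathcal{C}$. The hard part is the reconstruction step: one must check both that the amalgam can be cut down to a finitely generated member of $\mathcal{C}$ retaining the two marked copies of $\mathcal{A}$, and that the gluing relation genuinely pins down $f$ from the pair of embeddings — the counting of embeddings and the concluding cardinal arithmetic are then routine.
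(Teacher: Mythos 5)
Your proposal is correct and follows essentially the same route as the paper: amalgamate two copies of $\mathcal{A}$ over $\mathcal{Z}$ (one via inclusion, one via the self-embedding), observe that the resulting finitely generated structure determines the self-embedding from the pair of marked copies of $\mathcal{A}$, and count that each isomorphism type admits only countably many such pairs. Your version is, if anything, slightly more explicit than the paper's about cutting the amalgam down to the substructure generated by the two copies and about the injectivity of the reconstruction map.
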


\begin{proof} For each self-embedding $\theta$ of $\cal Z$, let us apply the amalgamation property to amalgamate two
copies ${\cal A}_1$ and ${\cal A}_2$ of $\cal A$ over $\cal Z$ viewed as a substructure of ${\cal A}_1$ via the inclusion map, 
and as a substructure of ${\cal A}_2$ via $\theta$. Then this amalgam is generated by the union of generating sets for 
${\cal A}_1$ and ${\cal A}_2$, so is finitely generated.

We must show that this construction results in $2^{\aleph_0}$ pairwise non-isomorphic structures. For this, let us say that 
$\cal C$ {\em exhibits} a self-embedding $\theta: {\cal Z} \to {\cal Z}$ if for some embeddings $\varphi_1$ and $\varphi_2$ of 
$\cal A$ into $\cal C$, $\cal C$ is generated by $\varphi_1{\cal A} \cup \varphi_2{\cal A}$ and for all $z \in {\cal Z}$, 
$\varphi_2 \theta(z) = \varphi_1(z)$. Then $\cal B$ resulting from the above construction exhibits $\theta$. 
Furthermore, given any $\cal C$, we see that it can only exhibit countably many self-embeddings $\theta$. For if $\theta$ is 
exhibited, and $\varphi_1$ and $\varphi_2$ are the corresponding maps, then first, there are only countably many possibilities 
for $\varphi_1$ and $\varphi_2$ as they are determined by their actions on a finite generating set, and there only $\aleph_0$
possibilities for where they are mapped, and in view of the equation $\varphi_2 \theta(z) = \varphi_1(z)$, $\theta$ is 
determined from $\varphi_1, \varphi_2$. Now let $\kappa$ be the number of non-isomorphic structures arising from the 
construction. Then the total number of self-embeddings of $\cal Z$ exhibited is at most $\kappa \cdot \aleph_0$. But we are
told that the number of functions which can be exhibited equals $2^{\aleph_0}$. Therefore 
$\kappa \cdot \aleph_0 = 2^{\aleph_0}$ which implies that $\kappa  = 2^{\aleph_0}$.    \end{proof}

\begin{corollary} \label{2.5} There are $2^{\aleph_0}$ non-isomorphic finitely generated lattices.
\end{corollary}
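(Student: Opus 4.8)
The plan is to apply Theorem \ref{2.4} with $\cal C$ taken to be the class of all countable lattices, which is an amalgamation class by Lemma \ref{2.1}(ii). To do this I must exhibit lattices ${\cal Z} \le {\cal A}$, both countable, with $\cal A$ finitely generated and $\cal Z$ possessing $2^{\aleph_0}$ self-embeddings. Note first that $\cal Z$ cannot itself be finitely generated: a finitely generated countable structure has only countably many endomorphisms, since an endomorphism is determined by its values on a finite generating set, and there are only $\aleph_0$ choices for each such value. This is why the class of finitely generated lattices of Lemma \ref{2.1}(iii) is not directly usable for the role of $\cal Z$, and one works instead inside the larger amalgamation class of all countable lattices.

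For $\cal Z$ I would take the free lattice $L_\omega$ on a countably infinite set of generators $\{x_n : n \in \omega\}$. Any injection $\sigma : \omega \to \omega$ determines, by the universal property, a homomorphism $\hat\sigma : L_\omega \to L_\omega$ with $\hat\sigma(x_n) = x_{\sigma(n)}$. Since a subset of a free generating set freely generates the sublattice it generates, the image $\langle x_{\sigma(n)} : n \in \omega \rangle$ is free on $\{x_{\sigma(n)}\}$ and $\hat\sigma$ is an isomorphism onto it; thus each $\hat\sigma$ is a self-embedding of $L_\omega$. Distinct injections differ on some generator and so give distinct embeddings, whence $L_\omega$ has at least as many self-embeddings as there are injections $\omega \to \omega$, namely $2^{\aleph_0}$; as $L_\omega$ is countable this number is also at most $2^{\aleph_0}$, so it is exactly $2^{\aleph_0}$.

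It remains to realize $L_\omega$ as a sublattice of a finitely generated lattice. Here I would invoke the classical fact, going back to Whitman's study of free lattices, that the free lattice on three generators contains a sublattice isomorphic to the free lattice on countably many generators. Taking $\cal A$ to be the free lattice on three generators, which is finitely generated and countable, and identifying $\cal Z = L_\omega$ with such a sublattice, all hypotheses of Theorem \ref{2.4} are met. The theorem then produces $2^{\aleph_0}$ pairwise non-isomorphic finitely generated members of $\cal C$, each of which is a lattice, which is exactly the assertion.

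The one genuinely nontrivial ingredient is the embedding of $L_\omega$ into a finitely generated lattice; everything else — the amalgamation property, the counting of self-embeddings, and the bookkeeping carried out in Theorem \ref{2.4} — is routine once this is in hand. I expect this embedding step to be the main obstacle, and would either cite the classical free-lattice result directly or, if a self-contained treatment is wanted, exhibit explicit elements of the free lattice on three generators that generate a free subalgebra on infinitely many generators.
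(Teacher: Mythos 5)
Your proposal is correct, and its skeleton is the same as the paper's: apply Theorem \ref{2.4} inside the amalgamation class of countable lattices, with $\cal A$ the free lattice on three generators and $\cal Z$ a suitable sublattice with $2^{\aleph_0}$ self-embeddings. The only real divergence is the choice of $\cal Z$. You take $\cal Z$ to be a copy of the free lattice on $\aleph_0$ generators inside the free lattice on three generators, citing the classical Whitman-type result that such a copy exists, and count self-embeddings via injections of the generating set. The paper instead takes $\cal Z$ to be a chain of order type $\mathbb Z$ inside the free lattice on three generators (citing Dilworth for the existence of such a chain); a chain is automatically a sublattice, its lattice self-embeddings are exactly the strictly increasing maps $\mathbb{Z} \to \mathbb{Z}$, and there are visibly $2^{\aleph_0}$ of these. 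Both routes lean on one nontrivial classical fact about the free lattice on three generators, but the paper's is the lighter ingredient: an infinite chain is easier to exhibit and its self-embeddings are easier to count than those of a free sublattice of infinite rank (where, as you note, one must also check that a subset of a free generating set freely generates its sublattice). Your remark that $\cal Z$ cannot itself be finitely generated, so that one must work in the class of countable rather than finitely generated lattices, is accurate and applies equally to the paper's choice, where it is left implicit.
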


\begin{proof}  According to \cite{dilworth},  the free lattice on 3 generators has an infinite chain in order-type $\mathbb Z$,
$\{z_n: n \in {\mathbb Z}\}$ say, and we let $\cal Z$ be the sublattice of the free lattice with this domain. Then there are
$2^{\aleph_0}$ self-embeddings of $\cal Z$ into $\cal Z$, so the result follows at once from the theorem.  \end{proof}

\begin{corollary} \label{2.6}  There are $2^{\aleph_0}$ non-isomorphic countable homogeneous lattices.
\end{corollary}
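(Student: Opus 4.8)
The plan is to combine Corollary \ref{2.5} with Lemma \ref{2.3} and a counting argument resting on the fact that a countable homogeneous structure has a countable age. By Corollary \ref{2.5} there is a family $\{{\cal B}_i : i \in I\}$ of pairwise non-isomorphic finitely generated lattices with $|I| = 2^{\aleph_0}$. Each ${\cal B}_i$ lies in the amalgamation class of finitely generated lattices (Lemma \ref{2.1}(iii)), and such lattices are countable, so Lemma \ref{2.3} applies and embeds ${\cal B}_i$ as a substructure into a countable homogeneous lattice ${\cal M}_i$, namely the union of a chain of finitely generated lattices, which is again a countable lattice. This manufactures a countable homogeneous lattice for every index $i$; what remains is to show that the ${\cal M}_i$ realise $2^{\aleph_0}$ distinct isomorphism types.

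First I would record the key finiteness observation: any countable homogeneous lattice ${\cal M}$ has only countably many finitely generated sublattices up to isomorphism. This is immediate, since the domain of ${\cal M}$ is countable and hence has only countably many finite subsets, each generating a single finitely generated sublattice; the age of ${\cal M}$ is therefore countable (this is the first clause of Fra\"iss\'e's Theorem quoted in the introduction).

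The counting then goes as follows. Fix an isomorphism type of countable homogeneous lattice ${\cal M}$. Since ${\cal B}_i$ embeds in ${\cal M}_i$, whenever ${\cal M}_i \cong {\cal M}$ the lattice ${\cal M}$ contains a finitely generated sublattice isomorphic to ${\cal B}_i$. As the ${\cal B}_i$ are pairwise non-isomorphic while ${\cal M}$ admits only countably many isomorphism types of finitely generated sublattice, there are at most $\aleph_0$ indices $i$ with ${\cal M}_i \cong {\cal M}$. Writing $\lambda$ for the number of isomorphism types occurring among the ${\cal M}_i$, the index set $I$ is thus partitioned into $\lambda$ fibres each of size at most $\aleph_0$, so $2^{\aleph_0} = |I| \le \lambda \cdot \aleph_0$. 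Were $\lambda$ countable this would force $2^{\aleph_0} \le \aleph_0$, a contradiction; hence $\lambda$ is uncountable and $\lambda \cdot \aleph_0 = \lambda$, giving $\lambda \ge 2^{\aleph_0}$. Since there are at most $2^{\aleph_0}$ isomorphism types of countable structure in the finite signature $\{\vee, \wedge\}$, we also have $\lambda \le 2^{\aleph_0}$, and therefore $\lambda = 2^{\aleph_0}$.

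The one step needing care, and the conceptual heart of the argument, is the fibre bound: that a single countable homogeneous lattice cannot absorb more than countably many of the $2^{\aleph_0}$ pairwise non-isomorphic finitely generated lattices. Everything hinges on the age of a countable structure being countable, which is exactly what converts the abundance of finitely generated lattices supplied by Corollary \ref{2.5} into an abundance of homogeneous ones. The remaining cardinal arithmetic is routine, and the upper bound $\lambda \le 2^{\aleph_0}$ is automatic.
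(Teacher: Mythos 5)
Your proposal is correct and follows essentially the same route as the paper: apply Corollary \ref{2.5}, embed each finitely generated lattice into a countable homogeneous one via Lemma \ref{2.3}, and observe that a countable lattice has only countably many finitely generated sublattices, so each homogeneous isomorphism type can account for at most $\aleph_0$ of the $2^{\aleph_0}$ finitely generated lattices. The paper states this counting argument in one line; your version merely spells out the same cardinal arithmetic in more detail.
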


\begin{proof} By Corollary \ref{2.5} there are $2^{\aleph_0}$ non-isomorphic finitely generated lattices, and these are all 
countable. By Lemma \ref{2.3}, each is a sublattice of a countable homogeneous lattice. Each countable lattice only has 
countably many finitely generated sublattices, and therefore there must be $2^{\aleph_0}$ of these countable homogeneous 
lattices which are not isomorphic.   \end{proof}

Although our main focus is on lattices, the referee has pointed out that the main result can be applied in many more situations, and we now describe some of these.

\begin{corollary} \label{2.7}  There are $2^{\aleph_0}$ non-isomorphic countable homogeneous groups. \end{corollary}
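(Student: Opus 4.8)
The plan is to follow the pattern of Corollaries \ref{2.5} and \ref{2.6}, taking for $\cal C$ the class of all countable groups and supplying the data needed to invoke Theorem \ref{2.4}. First I would check that $\cal C$ qualifies as an amalgamation class in the sense used here: it is closed under isomorphism and under passing to (necessarily countable) subgroups, the free product of two countable groups witnesses the joint embedding property, and the amalgamation property is the classical fact that any two groups sharing a common subgroup $A$ embed, with agreement on $A$, into their amalgamated free product $B *_A C$. All groups occurring remain countable, so both Theorem \ref{2.4} and Lemma \ref{2.3} apply.

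Next I would produce a pair $\cal Z \le \cal A$ in $\cal C$ meeting the hypotheses of Theorem \ref{2.4}. Take $\cal Z$ to be the free abelian group $\mathbb{Z}^{(\omega)}$ of countably infinite rank, with basis $(e_n)_{n \in \omega}$. For each sequence $(k_n)_{n \in \omega}$ with $k_n \in \{1,2\}$, the assignment $e_n \mapsto k_n e_n$ extends to an endomorphism of $\cal Z$ that is injective (since $\mathbb{Z}$ is torsion-free and each $k_n \neq 0$), and distinct sequences yield distinct maps; hence $\cal Z$ has $2^{\aleph_0}$ self-embeddings. To place $\cal Z$ inside a finitely generated group $\cal A \in \cal C$, one convenient choice is the wreath product $\mathcal{A} = \mathbb{Z} \wr \mathbb{Z}$, which is generated by two elements and whose base subgroup is $\bigoplus_{\mathbb{Z}} \mathbb{Z} \cong \mathbb{Z}^{(\omega)} = \cal Z$; alternatively one may invoke the Higman--Neumann--Neumann theorem that every countable group embeds in a $2$-generated group.

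With these data Theorem \ref{2.4} at once yields $2^{\aleph_0}$ pairwise non-isomorphic finitely generated groups, all of which are countable. The argument is then completed exactly as in Corollary \ref{2.6}: by Lemma \ref{2.3} each such finitely generated group is a subgroup of some countable homogeneous group, and since any countable group has only countably many finitely generated subgroups up to isomorphism, the $2^{\aleph_0}$ finitely generated groups must be distributed among $2^{\aleph_0}$ non-isomorphic countable homogeneous groups.

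I expect the only genuine work to lie in verifying the hypotheses of Theorem \ref{2.4}, namely confirming the amalgamation property for groups and exhibiting the pair $(\cal Z, \cal A)$. Both are classical: the former is Schreier's embedding theorem for amalgamated free products, and the latter reduces to the easy self-embedding count for $\mathbb{Z}^{(\omega)}$ together with a standard embedding of a countable group into a finitely generated one. Once these are in hand, the deduction is a direct transcription of the lattice case.
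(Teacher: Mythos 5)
Your proof is correct and follows essentially the same route as the paper's: check that countable groups form an amalgamation class, feed a pair $\mathcal{Z}\le\mathcal{A}$ with $\mathcal{A}$ finitely generated and $\mathcal{Z}$ admitting $2^{\aleph_0}$ self-embeddings into Theorem \ref{2.4}, and finish via Lemma \ref{2.3} and the counting argument of Corollary \ref{2.6}. The only difference is the choice of witness: the paper takes $\mathcal{A}$ to be the free group on two generators and $\mathcal{Z}$ a free subgroup of infinite rank, whereas you take $\mathcal{A}=\mathbb{Z}\wr\mathbb{Z}$ with $\mathcal{Z}$ its base group $\mathbb{Z}^{(\omega)}$; both choices satisfy the hypotheses equally well.
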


\begin{proof} Using Lemma \ref{2.3} and Theorem \ref{2.4} it is clear that we just need to identify suitable $\cal A$ and
$\cal Z$ to which Theorem \ref{2.4} can be applied. For this we can take $\cal A$ to be a free group on 2 generators, and 
$\cal Z$ a subgroup which is a free group of infinite rank.  \end{proof}

\begin{corollary} \label{2.8} Let $R$ be a countable ring for which there is a left invertible $m \times n$ matrix, where 
$m < n$. Then there are $2^{\aleph_0}$ non-isomorphic countable homogeneous $R$-modules. \end{corollary}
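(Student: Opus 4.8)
The plan is to run exactly the argument of Corollaries \ref{2.5}--\ref{2.7}, taking $\cal C$ to be the class of all countable (left) $R$-modules and supplying a suitable pair $\cal Z \le \cal A$ for Theorem \ref{2.4}. First I would check that $\cal C$ is an amalgamation class in the sense used here: it is closed under isomorphism and under submodules, it has JEP via the direct sum $B \oplus C$, and it has AP because the category of $R$-modules is abelian, so the pushout $D = (B \oplus C)/\{(p_1 a, -p_2 a) : a \in A\}$ of two embeddings $p_1 : A \to B$, $p_2 : A \to C$ receives $B$ and $C$ as submodules with the square commuting (the pushout of a monomorphism is a monomorphism). Since $R$ is countable, $B \oplus C$ and hence $D$ are countable, so $D \in \cal C$.

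The heart of the matter is to produce $\cal A$ finitely generated and $\cal Z \le \cal A$ with $2^{\aleph_0}$ self-embeddings, and this is where the matrix hypothesis enters. A left-invertible $m \times n$ matrix $M$, with left inverse $N$ satisfying $NM = I_n$, exhibits the map $R^n \to R^m$ given by $M$ as a split monomorphism, so $R^n$ is isomorphic to a direct summand of $R^m$; write $R^m \cong R^n \oplus K$. Putting $d = n - m \ge 1$ and using $R^n \cong R^m \oplus R^d$, I would deduce $R^n \cong (R^n \oplus K) \oplus R^d$, an isomorphism of the form $R^n \cong R^n \oplus L$ with $L = K \oplus R^d \ne 0$. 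Iterating this relation inside $R^n$ yields an internal direct sum $L_0 \oplus L_1 \oplus L_2 \oplus \cdots \cong L^{(\omega)}$, which contains a copy of $R^{(\omega)}$. Thus $R^{(\omega)}$ embeds in the finitely generated module $R^n$, and I set $\cal A = R^n$ and $\cal Z = R^{(\omega)}$, both countable and hence in $\cal C$.

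It then remains to count self-embeddings and assemble the conclusion. The free module $\cal Z = R^{(\omega)}$ with basis $(e_i)_{i \in \omega}$ admits, for every injection $f : \omega \to \omega$, the embedding determined by $e_i \mapsto e_{f(i)}$, and distinct $f$ give distinct maps; as there are $2^{\aleph_0}$ such $f$, $\cal Z$ has $2^{\aleph_0}$ self-embeddings. Theorem \ref{2.4} then provides $2^{\aleph_0}$ isomorphism types of finitely generated $R$-modules in $\cal C$, and exactly as in Corollary \ref{2.6}, Lemma \ref{2.3} embeds each of these in a countable homogeneous $R$-module; since each countable module has only countably many finitely generated submodules, $2^{\aleph_0}$ of the resulting homogeneous modules must be pairwise non-isomorphic.

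I expect the iteration step to be the main obstacle: extracting from the single rank-increasing relation $R^m \cong R^n \oplus K$ (which is precisely the failure of invariant basis number forced by $m < n$) an honest embedding of the infinite-rank free module $R^{(\omega)}$ into a finitely generated one. A secondary point to watch is the left/right-module bookkeeping in translating ``left-invertible matrix'' into ``$R^n$ is a summand of $R^m$'', but this only affects whether one works with left or right $R$-modules and not the final count.
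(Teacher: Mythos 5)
Your proposal is correct and takes essentially the same route as the paper: choose $\cal A$ to be a finitely generated free $R$-module and $\cal Z$ an infinitely generated free module embedded in it by exploiting the left-invertible matrix, then apply Theorem \ref{2.4} and Lemma \ref{2.3} exactly as in Corollary \ref{2.6}. The paper merely asserts the existence of such a $\cal Z$ (deferring to \cite{bergman1}), and your iteration of $R^n \cong R^n \oplus L$ to produce an internal direct sum $L^{(\omega)} \supseteq R^{(\omega)}$ inside $R^n$ is a correct way to supply that step.
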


\begin{proof} This is similar to the previous corollary, where we let $\cal A$ be an $m$-generator free $R$-module, 
and $\cal Z$ be an infinitely generated free $R$-module inside $\cal A$. The fact that such $\cal Z$ exists follows from
the existence of the stated left invertible matrix. Further background to the existence of rings with the stated property 
may be found for instance in \cite{bergman1}. \end{proof}

Finally we remark that a more general version of these results would state that any variety of algebras having the 
amalgamation property and such that the $\aleph_0$-generated free algebra is embeddable in a finitely generated algebra,
contains $2^{\aleph_0}$ countable homogeneous members. Further examples can be derived from Corollary 12(iii) of 
\cite{bergman2}, which states the following: If $V$ is a variety of finitary algebras which is generated by the free algebra 
$F_V(x_1, x_2, \ldots, x_m)$ on $m$ generators, and this contains a free algebra on $> m$ generators, then it also contains a 
free algebra on $\aleph_0$ generators.

\end{document}